\theoremstyle{plain}
\newtheorem{thm}{Theorem}[section]
\newtheorem{prop}[thm]{Proposition}
\newtheorem{lem}[thm]{Lemma}
\newtheorem{cor}[thm]{Corollary}
\theoremstyle{definition}
\newtheorem{defn}{Definition}
\newtheorem{eg}{Example}
\theoremstyle{remark}
\newtheorem*{rmk}{Remark}
\newcommand{\e}{\varepsilon}
\renewcommand{\epsilon}{\varepsilon}
\newcommand{\p}{\varphi}
\DeclareMathOperator*{\spn}{span}
\newcommand{\cspan}{\overline{\spn}}
\renewcommand{\phi}{\varphi}
\newcommand{\NewPairedDelimiter}[3]{ %
  \expandafter\DeclarePairedDelimiter\csname PAIRED\string#1\endcsname{#2}{#3} %
			 %Calls \DeclarePairedDelimiter to create \PAIREDabs
  \newcommand#1{ %Create abs
    \@ifstar{\csname PAIRED\string#1\endcsname} %If abs is starred, call PAIREDabs
            {\@ifnextchar[{\csname PAIRED\string#1\endcsname} % ElseIf otns, call PAIREDabs
                          {\csname PAIRED\string#1\endcsname*}% Else call PAIREDabs*
            }%
  }%
}
\newcommand{\NewPairedDelimiterX}[5]{ %
  \expandafter\DeclarePairedDelimiterX\csname PAIREDX\string#1\endcsname[#2]{#3}{#4}{#5}%
  \newcommand#1{%
    \@ifstar{\csname PAIREDX\string#1\endcsname}
            {\@ifnextchar[{\csname PAIREDX\string#1\endcsname}
                          {\csname PAIREDX\string#1\endcsname*}%
            }%
  }%
}
\NewPairedDelimiter{\abs}{\lvert}{\rvert}
\NewPairedDelimiter{\norm}{\lVert}{\rVert}
\NewPairedDelimiterX{\innerp}{2}{\langle}{\rangle}{#1,#2}
\newcommand{\bssig}{\ensuremath{\norm{ \sum_{i \in \sigma} \innerp{x}{\p_i} \p_i }}}
\newcommand{\bsi}{\ensuremath{\norm{ \sum_{i \in I} \innerp{x}{\p_i} \p_i }}}
\DeclareMathOperator*{\repart}{\text{Re}\,}
\newcommand{\re}[1]{\repart{#1}}
\renewcommand*\env@matrix[1][*\c@MaxMatrixCols c]{%
  \hskip -\arraycolsep
  \let\@ifnextchar\new@ifnextchar
  \array{#1}}
\title[The Unconditional Constants for Frame Expansions]{The Unconditional Constants for Hilbert Space Frame Expansions}
\author[T. Bemrose, P.G. Casazza, V. Kaftal, and R.G. Lynch]{Travis Bemrose, Peter G. Casazza, Victor Kaftal, and Richard G. Lynch}
\thanks{The second and fourth authors were supported by NSF 1307685, NSF ATD 1042701, NSF ATD 00040683, AFOSR DGE51: FA9550-11-1-0245, ARO W911NF-16-1-0008;  the third named author was supported by the Simons Foundation grant 245660.}
\begin{document}

\begin{abstract}
The most fundamental notion in frame theory is the frame expansion of a vector. Although it is well known that these expansions are unconditionally convergent series, no characterizations of the unconditional constant were known. This has made it impossible to get accurate quantitative estimates for problems which require using subsequences of a frame. We will prove some new results in frame theory by showing that the unconditional constants of the frame expansion of a vector in a Hilbert space are bounded by $\sqrt{\frac{B}{A}}$, where $A,B$ are the frame bounds of the frame. Tight frames thus have unconditional constant one, which we then generalize by showing that  Bessel sequences have frame expansions with unconditional constant one if and only if the sequence is an orthogonal sum of tight frames. We give further results concerning frame expansions, in which we examine when $\sqrt{\frac BA}$ is actually attained or not.  We end by discussing the connections of this work to {\it frame multipliers}.  These results hold in both real and complex Hilbert spaces.  
\end{abstract}

\maketitle

% Keywords and AMS #
\noindent {\bfseries Keywords} Hilbert space frames; Frame operator; Frame expansion; Unconditional convergence

\bigskip
\noindent {\bfseries AMS Classification} 42C15

% % % % % % % % % % % % % % % % % % % % % % % % % %
% % % % % % % % % % New Section % % % % % % % % % %
% % % % % % % % % % % % % % % % % % % % % % % % % %
\section{Introduction}

Hilbert space frames have traditionally been used in signal processing.  But over the last few years, frame theory has become one of the most applied subjects in mathematics.  Fundamental to the notion of a Bessel sequence, and more specifically a frame, is that it is a {\it possibly redundant} sequence of vectors $\Phi := \{\p_i\}_{i\in I}$ in a Hilbert space for which the frame expansions of a vector $x$,
\begin{equation*}
	Sx = \sum_{i\in I} \innerp{x}{\p_i} \p_i,
\end{equation*}
are unconditionally convergent series. But until now, no work has been done on understanding the precise unconditional behavior  of the frame expansions of vectors in the Hilbert space. 

This constant is important for a number of reasons. First, it is quite useful that the Bessel bound for Bessel sequences is still a Bessel bound for all subsequences. But for frame expansions, subsequences can have larger norms than the original sequence. This issue occurs regularly in frame theory where one often partitions a frame into two subsets and then works with the frame operators for the subsets of the frame. So for quantitative estimates, we need to know the unconditional constants for frame expansions of subsets of the frame. Second, when using finite dimensional methods to approximate a frame (see \cite{finitemethod}), in general the approximation constants depend on the unconditional constant of the frame.

In this paper we will show that the unconditional constants (for all standard forms of unconditional convergence) for the frame expansions are bounded above by $\sqrt{\frac{B}{A}}$ where $A,B$ are the frame bounds of the frame. See Proposition \ref{usefulthm}. This means that {\it tight frames} have 1-unconditionally convergent series for their frame expansions.  We will then expand this to a classification of Bessel sequences by showing that the frame expansions are 1-unconditional if and only if the Bessel sequence is an orthogonal sum of tight frames. See Theorem \ref{bigthm}.  This is surprising at first since we have not assumed the family has any lower frame bound but conclude that locally it does have lower frame bounds.  It follows that this Bessel sequence is a frame if and only if the tight frame bounds of the orthogonal parts are uniformly bounded away from zero.  
We will also examine closely when the unconditional constants are actually equal to $\sqrt{\frac BA}$ and whether $\sqrt{\frac BA}$ can be attained with specific choices of $x$, or as a limit. See the results following Corollary \ref{tightcor} and the example section, Section \ref{examples}.
Finally, we will conclude the paper with a relationship to frame multipliers.

% % % % % % % % % % % % % % % % % % % % % % % % % %
% % % % % % % % % % New Section % % % % % % % % % %
% % % % % % % % % % % % % % % % % % % % % % % % % %
\section{Frame Theory Preliminaries}\label{ftp}

A brief introduction to frame theory is given in this section. For a thorough approach to the basics of frame theory, see \cite{petesbook, ole_book}. For the entirety of the paper, $\mathscr{H}$ will denote a separable, finite or infinite dimensional, real or complex Hilbert space while $\mathscr{H}^d$ will denote an $d$-dimensional, real or complex Hilbert space. Our index set, $I$, will either be $[N] := \{1,2,\dots,N\}$ or $\mathbb{N}$. Finally, $\mathbb{I}$ will represent the identity operator.

\begin{defn}
A family of vectors $\Phi := \{\p_i\}_{i \in I}$ in a Hilbert space $\mathscr{H}$ is said to be a \emph{frame} if there are constants $0 < A \leq B < \infty$ so that for all $x \in \mathscr{H}$,
\begin{align}\label{frameeq}
A \norm{x}^2 \leq \sum_{i\in I} \abs{ \innerp{x}{\p_i} }^2 \leq B \norm{x}^2.
\end{align}
The constant $A$ is called a \emph{lower frame bound} and $B$ an \emph{upper frame bound}. If only $B$ is assumed, then it is called a \emph{$B$-Bessel sequence} or simply \emph{Bessel} when reference to the bound is unnecessary. If $A = B$, it is said to be a \emph{tight frame} and if $A = B = 1$, it is a \emph{Parseval frame}. 
If there is a constant $c$ so that $\norm{\p_i} = c$ for all $i \in I$, it is an \emph{equal norm frame}. If there is a constant $d >0$ so that $|\langle \phi_i, \phi_j\rangle| = d$ for all $i \neq j$, then it is called \emph{equiangular}. %The values $\{ \innerp{x}{\p_i} \}_{i \in I}$ are called the \emph{frame coefficients} of the vector $x\in \mathscr{H}$ with respect to the frame $\Phi$.
\end{defn}

\begin{rmk}
We will always assume the stated frame bounds are optimal, that is, we always work with the largest value of $A$ and smallest value of $B$ for which the frame inequality (\ref{frameeq}) holds.
\end{rmk}

\begin{defn}
If $\Phi := \{\p_i\}_{i \in I}$ is a $B$-Bessel sequence of vectors in $\mathscr{H}$, then the \emph{synthesis operator} of $\Phi$ is the operator $T \colon \ell^2(I) \to \mathscr{H}$ that gives a linear combination of the vectors, and the associated \emph{analysis operator} is the adjoint operator ${T^* \colon\mathscr{H} \to \ell^2(I)}$ given by
\begin{align*}
	T \{c_i\}_{i \in I} &:= \sum_{i \in I} c_i \p_i, \\
	T^*x &:= \{ \innerp{x}{\p_i} \}_{i\in I} = \{ \p_i^* x \}_{i\in I}
\end{align*}
respectively, with norms $\norm{T} = \norm{T^*} = \sqrt{B}$.
\end{defn}

In the finite dimensional setting, it is often convenient to work with the matrix representation. Just as a basis can be viewed as a matrix whose columns are the basis vectors, we can view a frame as a matrix whose columns are the frame vectors, and this matrix is the matrix representation for the synthesis operator. In addition to the columns of the synthesis operator square-summing to the norm of the vectors, when represented against the eigenbasis of the frame operator (definition below), the rows are also orthogonal to each other and square-sum to the eigenvalues of $S$. See \cite{amspaper} for more on this representation.

Recall that for fixed $x,y \in \mathscr{H}$ the definition of an outer product, $(x y^*)(z) = \innerp{z}{y} x$ for all $z \in \mathscr{H}$. Some readers may be accustomed to $(x \otimes y) (z) = \innerp {z}{y} x$.

\begin{defn}
The \emph{frame operator} $S\colon \mathscr{H}\to\mathscr{H}$ is the self-adjoint operator defined by $S := TT^*$ satisfying
\begin{equation*}
Sx = TT^*x = \sum_{i \in I} \innerp{x}{\p_i} \p_i = \sum_{i \in I} \phi_i \phi_i^* x = \left(\sum_{i \in I} \phi_i \phi^* \right)x
\end{equation*}
for any $x \in \mathscr{H}$ with norm $\norm{S} = \norm{TT^*} = \norm{T}^2 = B$, thus the series
\begin{equation*}
S = \sum_{i \in I} \phi_i \phi_i^*
\end{equation*}
converges in the strong operator topology. The leftmost sum for $Sx$ is called the \emph{frame expansion} of the vector $x$. 

The \emph{Gramian operator} $G:\ell^2(I) \to \ell^2(I)$ is defined by $G:= T^*T$ satisfying
\begin{align*}
G \{c_j\}_{j \in I}  = T^*T \{c_j\}_{j \in I} = \sum_{j \in I} c_j \{ \innerp{\p_j}{\p_i} \}_{i\in I}.
%G \{c_j\}_{j \in I}  = T^*T \{c_j\}_{j \in I} = \left\{ \innerp{\sum_{j \in I} c_j \p_j}{\p_i} \right\}_{i\in I}. 
\end{align*}

In the finite case, the \emph{Gramian matrix} $[G_{ij}]_{i,j\in[N]}$ is the matrix representation of $G$ with respect to the standard orthonormal basis of $\ell^2(I)$ and has entries $G_{ij} = \langle \phi_j, \phi_i\rangle$ for all $i, j \in [N]$. This is a convenient way of viewing the norm of the vectors as well as the angles between them.
\end{defn}

\begin{defn}
We say that an operator $F:\mathscr{H} \to \mathscr{H}$ is \emph{positive} if for any $x \in \mathscr{H}$, we have $\innerp{Fx}{x} \geq 0$. It is \emph{strictly positive} if the inequality is strict. The operator is (strictly) negative if $-F$ is (strictly) positive. Given operators $F, G: \mathscr{H}\to \mathscr{H}$, we write $G \geq F$ if $G - F \geq 0$, where $0$ denotes the zero operator on $\mathscr{H}$. 
\end{defn}

If $\Phi$ is a frame with lower bound and upper bounds $A$ and $B$, respectively, then
\begin{equation*}
\innerp{Ax}{x} \le \innerp{Sx}{x} = \norm{T^* x}^2 = \sum_{i \in I} \abs{\innerp{x}{\p_i}}^2 \le \innerp{Bx}{x},
\end{equation*}
for any $x \in \mathscr{H}$. Hence, the operator inequality 
\begin{equation*}
A \cdot \mathbb{I} \leq S \leq B \cdot \mathbb{I}
\end{equation*} 
holds and the frame operator is strictly positive and invertible. If $\Phi$ is $B$-Bessel, then $0 \leq S \leq B \cdot \mathbb{I}$ and $S$ is positive.

% % % % % % % % % % % % % % % % % % % % % % % % % %
% % % % % % % % % % New Section % % % % % % % % % %
% % % % % % % % % % % % % % % % % % % % % % % % % %
\section{Unconditional Constants of the Frame Expansions}

This section is devoted to analyzing the precise unconditional behavior of the frame expansions. First we introduce the definition of unconditional convergence and the constants involved.

\begin{defn}
Given a sequence $\{v_i\}_{i \in I}$ of vectors in $\mathscr{H}$, the series
$ \sum_{i \in I} v_i $
is said to \emph{converge unconditionally} if for any $\sigma \subset I$,
$ \sum_{i \in \sigma} v_i  $
converges. There are two other equivalent definitions in which we require $\sum_{i \in I} \epsilon_i v_i$ to converge for all $\epsilon_i \in \{-1,1\}$, or alternatively require $\sum_{i \in I} a_i v_i$ to converge for any scalars $|a_i| <1$. The smallest constants $E_\sigma$, $E_\e$, and $E_a$ so that
\begin{alignat*}{2}
	\norm{\sum_{i \in \sigma} v_i} \le E_\sigma &\norm{\sum_{i \in I} v_i} & \quad &\mbox{for all } \sigma \subset I, \\
	\norm{\sum_{i \in I} \epsilon_i v_i} \le E_\epsilon &\norm{\sum_{i \in I} v_i} & \quad &\mbox{for all } \epsilon_i \in \{-1,1\}, \\
	\norm{\sum_{i \in I} a_i v_i} \le E_a &\norm{\sum_{i \in I} v_i} & \quad &\mbox{for all } \abs{a_i} \le 1,
\end{alignat*}
hold respectively, are called the \emph{unconditional constants}. Such constants exist by the Uniform Boundedness Principle. Note that these are always at least one since the full sum $\norm{\sum_{i \in I} v_i}$ is permitted on the left-hand-side of all three definitions. Futhermore, these constants are related by $E_\sigma \le E_\epsilon \le 2E_\sigma$ and $E_\epsilon \le E_a \le 2 E_\epsilon$. See \cite{Heil_book} for more on unconditional convergence and the constants involved. For the remainder of the paper, $C_\sigma$, $C_\e$, and $C_a$ will denote the respective unconditional constants of the frame expansions of a vector $x$ with respect to a Bessel sequence $\{\p_i\}_{i \in I}$ (that is, where $v_i = \innerp{x}{\p_i}\p_i$ for each $i$).
\end{defn}

We begin our quest by showing that $\sqrt{\frac{B}{A}}$ is an upper bound for all three unconditional constants of the frame expansions, where $A$ and $B$ are the lower and upper frame bounds, respectively.

\begin{prop}\label{usefulthm}
Let $\Phi := \{\p_i\}_{i \in I}$ be a frame for $\mathscr{H}$ with lower and upper frame bounds $A$ and $B$ respectively. Then all three of the unconditional constants of the frame expansions with respect to $\Phi$ are bounded above by $\sqrt{\frac{B}{A}}$. That is, 
\begin{enumerate}[(i)]
	\item \label{uc_ba1} For any $\sigma \subset I$ and for every $x \in \mathscr{H}$,
		\begin{align*} %\label{ei_unc}
			\norm{ \sum_{i \in \sigma} \innerp{x}{\p_i} \p_i } \leq
			\sqrt{\dfrac{B}{A}} \bsi.
		\end{align*}
	\item \label{uc_ba2} For any sequence $\{\e_i\}_{i\in I}$, with $\e_i \in \{-1,1\}$ for all $i$, and for every $x \in \mathscr{H}$,
		\begin{align*} %\label{ei_unc}
			\norm{ \sum_{i \in I} \e_i \innerp{x}{\p_i} \p_i } \leq  \sqrt{\dfrac{B}{A}}\bsi.
		\end{align*}

	\item \label{uc_ba3} For every sequence of real numbers $\{a_i\}_{i \in I}$ with $\abs{a_i} \leq 1$ for all $i$ and for every $x \in \mathscr{H}$, 
		\begin{align*} %\label{ai_unc}
			\norm{ \sum_{i \in I} a_i \innerp{x}{\p_i} \p_i } \leq \sqrt{\dfrac{B}{A}}\bsi.
		\end{align*}
\end{enumerate}
%\begin{align*} \label{mostuseful} 
%\bssig = \norm{S_\sigma x} \leq \sqrt{\frac{B}{A}} \norm{S x} = \sqrt{\frac{B}{A}} \bsi.
%\end{align*}
\end{prop}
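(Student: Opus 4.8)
The plan is to handle all three parts at once by rewriting each left-hand side as $TDT^{*}x$ for a suitable diagonal multiplier $D$ on $\ell^{2}(I)$, and then to bound $\norm{TDT^{*}x}$ using the operator norms $\norm{T}=\norm{T^{*}}=\sqrt{B}$ together with a lower estimate for $\norm{Sx}$ coming from the lower frame bound. Concretely, for a scalar sequence $\{d_{i}\}_{i\in I}$ with $\abs{d_{i}}\le 1$ let $D\colon\ell^{2}(I)\to\ell^{2}(I)$ be the diagonal operator $D\{c_{i}\}_{i\in I}=\{d_{i}c_{i}\}_{i\in I}$, which satisfies $\norm{D}\le 1$. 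Choosing $d_{i}=\chi_{\sigma}(i)$ gives the sum in \eqref{uc_ba1}, $d_{i}=\e_{i}$ gives \eqref{uc_ba2}, and $d_{i}=a_{i}$ gives \eqref{uc_ba3}: in each case $T^{*}x=\{\innerp{x}{\p_{i}}\}_{i\in I}$, then $DT^{*}x=\{d_{i}\innerp{x}{\p_{i}}\}_{i\in I}$, and finally $T(DT^{*}x)=\sum_{i\in I}d_{i}\innerp{x}{\p_{i}}\p_{i}$ is precisely the vector on the left-hand side (the strong-operator convergence of the frame expansion justifies this identity). Also note $\bsi=\norm{TT^{*}x}=\norm{Sx}$.

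The estimate then has two steps. First, $\norm{TDT^{*}x}\le\norm{T}\,\norm{D}\,\norm{T^{*}x}\le\sqrt{B}\,\norm{T^{*}x}$. Second, I would bound $\norm{T^{*}x}$ by $\tfrac{1}{\sqrt{A}}\norm{Sx}$: since $\norm{T^{*}x}^{2}=\innerp{Sx}{x}$, and by Cauchy--Schwarz $\innerp{Sx}{x}\le\norm{Sx}\,\norm{x}$, while the operator inequality $A\cdot\mathbb{I}\le S$ forces $A\norm{x}\le\norm{Sx}$, i.e. $\norm{x}\le\tfrac{1}{A}\norm{Sx}$, we get $\norm{T^{*}x}^{2}=\innerp{Sx}{x}\le\tfrac{1}{A}\norm{Sx}^{2}$. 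Combining the two steps yields $\norm{TDT^{*}x}\le\sqrt{B}\cdot\tfrac{1}{\sqrt{A}}\norm{Sx}=\sqrt{\tfrac{B}{A}}\,\bsi$, which is the claimed bound in all three parts.

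There is no serious obstacle here; the only mild subtlety—and the one genuinely nonobvious step—is the inequality $\norm{T^{*}x}\le A^{-1/2}\norm{Sx}$, where the lower frame bound is used indirectly through $A\cdot\mathbb{I}\le S$ rather than through the frame inequality applied to $x$ itself. Everything else is bookkeeping: checking that the three multipliers are bounded operators on $\ell^{2}(I)$ of norm at most one (immediate, being diagonal with entries bounded by $1$) and identifying $TDT^{*}x$ with the corresponding expansion. The same argument gives the complex-coefficient form of \eqref{uc_ba3} verbatim, since $\norm{D}\le 1$ still holds when $\abs{a_{i}}\le 1$ with $a_{i}$ complex; the restriction to real $a_{i}$ in the statement plays no role in this proof.
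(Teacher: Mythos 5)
Your proof is correct and follows essentially the same route as the paper: rewrite each left-hand side as $TDT^{*}x$ with a diagonal $D$ of norm at most one, bound this by $\sqrt{B}\,\norm{T^{*}x}$, and then show $\norm{T^{*}x}^{2}=\innerp{Sx}{x}\le \frac{1}{A}\norm{Sx}^{2}$. The only cosmetic difference is in that last inequality, where the paper applies $\mathbb{I}\le \frac{1}{A}S$ to the vector $S^{1/2}x$ while you use Cauchy--Schwarz together with $A\norm{x}\le\norm{Sx}$; both are one-line consequences of $A\cdot\mathbb{I}\le S$.
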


\begin{proof}
%Note that it is only necessary to prove (\ref{uc_ba3}) since summing over $\sigma$ is equivalent to setting $a_i = 1$ if $i \in \sigma$ and 0 otherwise and then summing over $I$, and $\{\e_i\}_{i\in I}$ satisfies the criteria on $\{a_i\}_{i \in I}$ $\left(\abs{\e_i} \leq 1 \text{ for all } i \right)$, so (\ref{uc_ba1}) and (\ref{uc_ba2}) are special cases of (\ref{uc_ba3}). Further, in what follows, $\norm{D} \le 1$ cannot be improved upon for (\ref{uc_ba1}) or (\ref{uc_ba2}), so the same proof applies to all three.
Let $T$ and $S$ by the synthesis operator and frame operator of $\Phi$, respectively. To prove (\ref{uc_ba1}), (\ref{uc_ba2}), and (\ref{uc_ba3}) let $D: \ell^2(I) \to \ell^2(I)$ be the ``diagonal" operator defined by $D\{c_{i}\}_{i \in I} := \{d_i c_i\}_{i \in I}$ with
\begin{enumerate}[(i)]
\item $d_i = 1$ if $i \in \sigma$ and $d_i = 0$ otherwise,
\item $d_i = \e_i$, or
\item $d_i = a_i$.
\end{enumerate}

Observe that the left-hand-side of the inequality in (\ref{uc_ba1}), (\ref{uc_ba2}), and (\ref{uc_ba3}), respectively, is precisely $\norm{T D T^* x}$  and the right-hand-side is $\sqrt{\frac{B}{A}}\norm{S x}$. We have that $\norm{D} \leq 1$ in all cases which implies 
\begin{align*}
	\norm{T D T^* x}^2 &\leq \norm{T}^2\norm{D}^2\norm{T^* x}^2 \leq B \innerp{Sx}{x} = B \innerp{ S^{1/2} x }{ S^{1/2} x }
\end{align*}
and because $A \cdot \mathbb{I} \leq S$, this is bounded above by
\begin{align*}
	B \innerp{ \left(\frac{1}{A} S\right) S^{1/2} x }{ S^{1/2} x } = \frac{B}{A} \norm{Sx}^2.
\end{align*}
Taking square roots gives the desired inequality.
%It is clear that $\norm{T_\sigma} \leq \norm{T}$ so that
%\begin{align*}
%	\norm{S_\sigma x}^2
%	&= \norm{T_\sigma T_\sigma^* x}^2 \leq \norm{T_\sigma}^2 \norm{T_\sigma^* x}^2 \leq \norm{T}^2 \norm{T^* x}^2\\
%	&\leq B \innerp{Sx}{x} = B \innerp{ S^{1/2} x }{ S^{1/2} x }
%\end{align*}
%and because $A \cdot \mathbb{I} \leq S$, we have that this is bounded above by
%\begin{align*}
%	B \innerp{ \left(\frac{1}{A} S\right) S^{1/2} x }{ S^{1/2} x } = \frac{B}{A} \norm{Sx}^2
%\end{align*}
%as desired.
\end{proof}

An immediately corollary of Proposition \ref{usefulthm} is that tight frames have $1$-unconditional frame expansions.

\begin{cor}\label{tightcor}
If $\Phi$ is a tight frame for $\mathscr{H}$, then the frame expansions with respect to $\Phi$ have unconditional constant $1$ for all forms of unconditional.
\end{cor}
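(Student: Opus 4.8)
The plan is to observe that Corollary \ref{tightcor} is an essentially immediate consequence of Proposition \ref{usefulthm}: for a tight frame we have $A = B$, so the upper bound $\sqrt{B/A}$ on all three unconditional constants becomes $\sqrt{1} = 1$. Combined with the remark (already noted in the definition of the unconditional constants) that $C_\sigma, C_\e, C_a \ge 1$ always, this pins each of the three constants to exactly $1$. So the proof is a two-line squeeze argument.

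Concretely, I would first invoke Proposition \ref{usefulthm} with $A = B$ to get $C_\sigma \le \sqrt{B/A} = 1$, $C_\e \le 1$, and $C_a \le 1$. Then I would recall that by definition each unconditional constant is the \emph{smallest} constant making the relevant inequality hold, and since the full sum $\sum_{i\in I}\innerp{x}{\p_i}\p_i$ is an admissible choice on the left-hand side of each of the three defining inequalities (taking $\sigma = I$, or $\e_i \equiv 1$, or $a_i \equiv 1$), we automatically have $C_\sigma, C_\e, C_a \ge 1$. Combining the two bounds yields $C_\sigma = C_\e = C_a = 1$, which is exactly the assertion that the frame expansions are $1$-unconditional in all three standard senses.

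There is essentially no obstacle here — the only thing to be careful about is not to over-claim. One should not write that ``the constants are strictly $1$ and never exceeded'' in a misleading way; the content is simply that the optimal unconditional constants equal $1$, equivalently that $\norm{\sum_{i\in\sigma}\innerp{x}{\p_i}\p_i} \le \norm{\sum_{i\in I}\innerp{x}{\p_i}\p_i}$ for every $\sigma \subset I$ and every $x$, and analogously for the sign and modulus-$\le 1$ formulations. If one wanted a self-contained display, it would be the single chain $\norm{TDT^*x}^2 \le \norm{T}^2\norm{D}^2\norm{T^*x}^2 \le B\innerp{Sx}{x} = A\innerp{Sx}{x} = \norm{Sx}^2$ using $S = A\cdot\mathbb{I}$ for a tight frame, but citing Proposition \ref{usefulthm} directly is cleaner. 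I would therefore write the proof as: ``This follows immediately from Proposition \ref{usefulthm} since $A = B$ forces $\sqrt{B/A} = 1$, and all unconditional constants are at least $1$.''
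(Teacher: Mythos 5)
Your proposal is correct and matches the paper exactly: the authors present Corollary \ref{tightcor} as an immediate consequence of Proposition \ref{usefulthm}, since $A=B$ forces the bound $\sqrt{B/A}=1$, and the unconditional constants are always at least $1$ by definition. No further comment is needed.
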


One might wonder if the value $\sqrt{\frac{B}{A}}$ is optimal in the sense that there are frames for which $\sqrt{\frac BA}$ is actually equal to the unconditional constants, and not simply a bound. Corollary \ref{tightcor} clearly implies this is the case for all tight frames. However, this is not necessarily the case in general. We will show in Corollary \ref{Ex2Cor} that if a vector $x\in\mathscr{H}$ and $\sigma \subset I$ exists so that equality holds in Theorem \ref{usefulthm}(\ref{uc_ba1}) 
(thus, in this case, $C_\sigma = \sqrt{\frac BA}$ and so all unconditional constants are equal to this),
then $A = B$ and the frame is necessarily tight. As a consequence, a simple compactness argument implies that a non-tight frame for a finite-dimensional space has $C_\sigma < \sqrt{\frac{B}{A}}$. It is still an open question as to whether $\sqrt{\frac{B}{A}}$ can be arbitrarily approached in the infinite dimensional case.  We first introduce some convenient notation.

\begin{defn}\label{subops}
Let $\{\p_i \}_{i \in I}$ be $B$-Bessel sequence of vectors with synthesis operator $T$ and frame operator $S$. For any $\sigma \subset I$ denote by $T_\sigma$, $T^{*}_{\sigma}$, and $S_\sigma$ the following related operators
\begin{alignat*}{2}
\quad T_\sigma\{c_i\}_{i \in \sigma} :=& \sum_{i \in \sigma} c_i \p_i, & \quad \{c_i\}_{i \in \sigma} &\in \ell^2(\sigma) \\
T_\sigma^* x :=& \left\{\innerp{x}{\p_i} \right\}_{i \in \sigma}, & x &\in \mathscr{H} \\
S_\sigma :=& T_\sigma T_\sigma^* = \sum_{i \in \sigma} \p_i \p_i^*,
%S _\sigma x :=& T_\sigma T_\sigma^* x, & x &\in \mathscr{H} \\
%=& \sum_{i \in \sigma} \innerp{x}{\p_i} \p_i\\
%=& \sum_{i \in \sigma} \p_i \p_i^* x
\end{alignat*}
respectively.
\end{defn}

\begin{rmk}
It follows that $0 \le S_\sigma$ and $S = S_\sigma + S_{\sigma^c}$ for any choice of $\sigma \subset I$. As a consequence, $S_\sigma \le S$, that is, $\|S_\sigma^{1/2}x\| \le \norm{S^{1/2}x}$ for all $x \in \mathscr{H}$.  %As sums of positive semidefinite operators, $\p_i\p_i^*$, the operators $S_\sigma$ and $S_{\sigma^c}$ are positive semidefinite and $S_\sigma \le S$ for any $\sigma \subset I$. 
\end{rmk}

\begin{rmk}
Note that $S_\sigma \leq S$ does not imply that $\norm{S_\sigma x} \leq \norm{Sx}$ for all $x \in \mathscr{H}$. See Proposition \ref{squares} and the discussion leading up to it for more details on this distinction.
\end{rmk}

\begin{cor} \label{Ex2Cor}
If for some $x \in \mathscr{H}$ and $\sigma \subset I$ we have the equality $\norm{S_\sigma x} = \sqrt{\frac{B}{A}} \norm{Sx}$, then
\begin{enumerate}[(i)]
	\item $\Phi$ is tight, that is $A = B$, and
	\item every frame vector removed is orthogonal to $x$, that is $\innerp{x}{\p_i} = 0$ for all $i \in \sigma^c$.
\end{enumerate}
\end{cor}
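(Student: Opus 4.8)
The plan is to retrace the chain of inequalities behind Proposition \ref{usefulthm}(\ref{uc_ba1}) and to pin down exactly when each link is an equality. Write $T$ for the synthesis operator of $\Phi$ and let $D\colon\ell^2(I)\to\ell^2(I)$ be the coordinate projection onto $\ell^2(\sigma)$, so that $S_\sigma x = TDT^*x$ exactly as in that proof. Since $\norm{D}\le 1$ and $\norm{T}=\sqrt{B}$,
\[
\norm{S_\sigma x}=\norm{TDT^*x}\le\norm{T}\,\norm{DT^*x}\le\norm{T}\,\norm{T^*x}=\sqrt{B}\,\norm{T^*x},
\]
and, because $\norm{T^*x}^2=\innerp{Sx}{x}$ and $A\cdot\mathbb{I}\le S$, also $B\,\norm{T^*x}^2\le\frac{B}{A}\,\norm{Sx}^2$. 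We may assume $x\neq0$. Now the hypothesized equality $\norm{S_\sigma x}=\sqrt{\frac{B}{A}}\,\norm{Sx}$ squeezes $\sqrt{B}\,\norm{T^*x}$ between $\norm{S_\sigma x}$ and $\sqrt{\frac{B}{A}}\,\norm{Sx}$, so — all quantities being non-negative — every inequality above is forced to be an equality.

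From this I would first read off part (ii). Equality in the displayed chain forces in particular $\norm{DT^*x}=\norm{T^*x}$; since $\norm{DT^*x}^2=\sum_{i\in\sigma}\abs{\innerp{x}{\p_i}}^2$ and $\norm{T^*x}^2=\sum_{i\in I}\abs{\innerp{x}{\p_i}}^2$, this says $\sum_{i\in\sigma^c}\abs{\innerp{x}{\p_i}}^2=0$, i.e. $\innerp{x}{\p_i}=0$ for all $i\in\sigma^c$. Part (i) then follows at once: by (ii) and the relation $S=S_\sigma+S_{\sigma^c}$ noted after Definition \ref{subops}, we get $S_{\sigma^c}x=\sum_{i\in\sigma^c}\innerp{x}{\p_i}\p_i=0$, hence $S_\sigma x=Sx$. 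Substituting back into the hypothesis yields $\norm{Sx}=\sqrt{\frac{B}{A}}\,\norm{Sx}$, and since $S$ is invertible and $x\neq0$ we have $\norm{Sx}\neq0$, so $\sqrt{\frac{B}{A}}=1$ and $A=B$.

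The only real subtlety is the equality bookkeeping in the first paragraph: confirming that global equality in $\norm{S_\sigma x}^2\le B\,\norm{T^*x}^2\le\frac{B}{A}\,\norm{Sx}^2$ genuinely collapses the individual estimates (the operator-norm step $\norm{TDT^*x}\le\norm{T}\,\norm{DT^*x}$ and the Bessel step $\norm{DT^*x}\le\norm{T^*x}$) to equalities — immediate once one observes that the middle term $\sqrt{B}\,\norm{T^*x}$ is trapped from both sides. A tempting alternative, trying to characterize equality in the operator-norm inequality directly via rank or eigenvector considerations, is unnecessary; one could also note that the remaining equality $A\,\norm{T^*x}^2=\norm{Sx}^2$ forces $x$ to be an $A$-eigenvector of $S$ (from $\norm{Sx-Ax}^2=-A\innerp{(S-A\mathbb{I})x}{x}\le0$), but (i) comes out more cleanly from (ii) as above.
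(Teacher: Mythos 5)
Your proof is correct and follows essentially the same route as the paper's: both retrace the inequality chain from Proposition \ref{usefulthm}(\ref{uc_ba1}), observe that the hypothesized equality collapses every link (in particular forcing $\norm{T_\sigma^*x}=\norm{T^*x}$, which gives (ii)), and then deduce (i) from $S_{\sigma^c}x=0$, hence $S_\sigma x=Sx$, hence $\sqrt{B/A}=1$. Your explicit remark that one may take $x\neq 0$ (so that $\norm{Sx}\neq 0$ by invertibility of $S$) is a small point the paper leaves implicit, but otherwise the arguments coincide.
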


\begin{proof}
The proof to Proposition \ref{usefulthm}(\ref{uc_ba1}) used the following inequality for all $x$
\begin{equation*}
	\norm{T_\sigma^* x}^2 = \norm{D^{1/2}T^* x}^2 \le \norm{T^* x}^2 = \innerp{Sx}{x}.
\end{equation*}
If $\norm{S_\sigma x} = \sqrt{\frac{B}{A}} \norm{Sx}$ holds for some $x$, then every inequality in the proof to Proposition \ref{usefulthm}(\ref{uc_ba1}) is an equality for that $x$, and $\norm{T_\sigma^* x}^2 = \norm{T^* x}^2$ holds. This first implies (ii) because
\begin{align*}
	0 &= \norm{T^* x}^2 - \norm{T_\sigma^* x}^2 \\
	  &= \sum_{i \in I} \abs{\innerp{x}{\p_i}}^2 - \sum_{i \in \sigma} \abs{\innerp{x}{\p_i}}^2 \\
	  &= \sum_{i \in \sigma^c} \abs{\innerp{x}{\p_i}}^2
\end{align*}
which means that $\innerp{x}{\p_i} = 0$ for all $i \in \sigma^c$. Secondly, this implies that for this same $x$, we have $S_{\sigma^c}x = \sum_{i \in \sigma^c} \innerp{x}{\p_i}\p_i = 0$, so that
\begin{align*}
	Sx &= S_\sigma x + S_{\sigma^c}x \\
	   &= S_\sigma x.
\end{align*}
But this implies $\displaystyle \norm{S_\sigma x} = \norm{Sx}$, which with the original equality gives $\sqrt{\frac{B}{A}} = 1$, giving (i).
\end{proof}

\begin{rmk}
	Finding a vector $x\in\mathscr{H}$ and $\sigma \subset I$ so that $\norm{S_\sigma x} = \norm{Sx}$, does not imply the frame is tight. These results follow precisely because equality was obtained with $\sqrt{\frac{B}{A}}$. 
\end{rmk}

This leads to one wondering just how large $\norm{S_\sigma x}$ can be compared to $\norm{Sx}$? How close can $\norm{S_\sigma x}/\norm{Sx}$ get  to $\sqrt{\frac{B}{A}}$? In Section \ref{examples}, Example \ref{eg2} we show that there are frames in which (i) $\norm{S_\sigma x} \approx  \sqrt{\frac{B}{A}}\norm{Sx}$ for some $x \in \mathscr{H}$ and $\sigma \subsetneq I$, (ii)  $C_\sigma > \frac{1}{2}\sqrt{\frac{B}{A}}$, and (iii) $\sqrt{\frac BA}$ is arbitrarily large. Hence, $\norm{S_\sigma x}$ can be as large as one would like when compared to $\norm{Sx}$. This warrants a discussion since it seems like a contradiction at a first glance.

The remark after Definition \ref{subops} noted that $S_\sigma \leq S$ for any $\sigma \subset I$. 
At first, it looks like this should imply the inequality $\norm{S_\sigma x} \leq \norm{Sx}$. However, this is not true in general. In fact, $S_\sigma \leq S$ is equivalent to $\|S_\sigma^{1/2} x\| \leq \norm{S^{1/2} x}$.
To conclude that $\norm{S_\sigma x} \leq \norm{Sx}$, we
would need $S_\sigma^2 \leq S^2$ so that
\begin{align*}
\norm{S_\sigma x}^2 &= \innerp{ S_\sigma x }{ S_\sigma x } = \innerp{ S_\sigma^2 x }{x} \\
&\leq \innerp{S^2 x}{x} = \innerp{Sx}{Sx} = \norm{Sx}^2.
\end{align*}
But $S_\sigma \leq S$ does not imply that $S_\sigma^2 \leq S^2$ in general. To guarantee this, $S_\sigma$ and $S$ would need to have the same eigenvectors, which is not certain to hold. The following proposition slightly generalizes the preceding discussion.

\begin{prop}\label{squares}
Let $\Phi := \{\p_i\}_{i \in I}$ be a Bessel sequence for $\mathscr{H}$ with frame operator $S$ and let $C > 0$ be a fixed constant. The following are equivalent:
\begin{enumerate}[(i)]
\item  For all $\sigma \subset I$,
$$
S_{\sigma}^2 \le C S^2.
$$

\item For every $x\in \mathscr{H}$ and any $\sigma \subset I$,
$$
\bssig^2 = \norm{S_\sigma x}^2 \leq C \norm{Sx}^2 = C \bsi^2.
$$
\end{enumerate}
\end{prop}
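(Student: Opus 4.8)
The plan is to observe that this equivalence is essentially a restatement of the definition of the operator order once one uses that $S$ and $S_\sigma$ are self-adjoint. Recall from Section \ref{ftp} and Definition \ref{subops} that both $S = \sum_{i \in I}\p_i\p_i^*$ and $S_\sigma = \sum_{i\in\sigma}\p_i\p_i^*$ are positive (hence self-adjoint) bounded operators, and that by our convention an operator inequality $F \le G$ means precisely $\innerp{Fx}{x} \le \innerp{Gx}{x}$ for every $x \in \mathscr{H}$.

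The key computation is the following. Fix $\sigma \subset I$. For any $x \in \mathscr{H}$, using $S_\sigma^* = S_\sigma$,
\begin{align*}
	\innerp{S_\sigma^2 x}{x} = \innerp{S_\sigma x}{S_\sigma^* x} = \innerp{S_\sigma x}{S_\sigma x} = \norm{S_\sigma x}^2,
\end{align*}
and likewise, using $S^* = S$,
\begin{align*}
	\innerp{C S^2 x}{x} = C\innerp{Sx}{Sx} = C\norm{Sx}^2.
\end{align*}
Therefore, for this fixed $\sigma$, the operator inequality $S_\sigma^2 \le C S^2$ — which by definition says $\innerp{S_\sigma^2 x}{x} \le \innerp{CS^2 x}{x}$ for all $x$ — is literally the same statement as $\norm{S_\sigma x}^2 \le C\norm{Sx}^2$ for all $x$. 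Quantifying over all $\sigma \subset I$ then gives the equivalence of (i) and (ii).

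There is no real obstacle here; the only point requiring care is to invoke the definition of the operator order through quadratic forms and to use self-adjointness of $S_\sigma$ and $S$ to rewrite $\innerp{S_\sigma^2 x}{x}$ and $\innerp{S^2 x}{x}$ as the squared norms $\norm{S_\sigma x}^2$ and $\norm{Sx}^2$. I would present the argument in a single short paragraph establishing both implications simultaneously, since the two conditions are, for each $\sigma$, term-by-term identical. It may also be worth remarking explicitly that this is exactly why $S_\sigma \le S$ (equivalently $\norm{S_\sigma^{1/2}x} \le \norm{S^{1/2}x}$) does not upgrade to $\norm{S_\sigma x} \le \norm{Sx}$: that would require $S_\sigma^2 \le S^2$, which fails in general because $S_\sigma$ and $S$ need not be simultaneously diagonalizable, as already noted in the discussion preceding the proposition.
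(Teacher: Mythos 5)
Your proof is correct and follows essentially the same route as the paper's: both unwind the operator inequality $S_\sigma^2 \le C S^2$ through the quadratic form, using self-adjointness to identify $\innerp{S_\sigma^2 x}{x}$ with $\norm{S_\sigma x}^2$ and $\innerp{CS^2x}{x}$ with $C\norm{Sx}^2$. Nothing is missing; your added remark about why $S_\sigma \le S$ does not upgrade to $\norm{S_\sigma x} \le \norm{Sx}$ merely restates the discussion the paper already gives before the proposition.
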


\begin{proof}
Note that $S_\sigma^2 \leq C S^2$ holds if and only if
\begin{align*}
\norm{S_\sigma x}^2 &= \innerp{S_\sigma x}{S_\sigma x} = \innerp{S_\sigma^2 x}{x} \\
&\leq C \innerp{S^2 x}{x} = C \innerp{Sx}{Sx} = C \norm{Sx}^2,
\end{align*}
giving the desired equivalence.
\end{proof}

\begin{rmk}
Corollary \ref{tightcor} combined with Proposition \ref{squares} implies that ${S^2_\sigma \leq S^2}$ when the frame is tight.
\end{rmk}

In Section \ref{examples}, Example \ref{eg4}, we give a frame for an infinite dimensional space so that
\begin{equation}
\dfrac{\norm{\sum\limits_{i = 1}^\infty \epsilon_i\langle y_n, \phi_i \rangle \phi_i}}{\norm{\sum\limits_{i = 1}^\infty\langle y_n, \phi_i \rangle \phi_i}} \to \sqrt{\dfrac{B}{A}}\label{blahhhh}
\end{equation}
for a sequence of vectors $\{y_n\}_{n =1}^\infty$ and a specific choice of $\epsilon_i \in \{-1,1\}$. That is, $\sqrt{\frac{B}{A}}$ can be arbitrarily approached and so $C_\epsilon = C_a = \sqrt{\frac{B}{A}}$. However, it is still an open question whether equality (rather than a limit) in (\ref{blahhhh}) can be achieved with a vector $x$ and scalars $\{\e_i\}_{i=1}^\infty$ satisfying $\e_i \in \{-1,1\}$, or with $\{a_i\}_{i =1}^\infty$ satisfying $|a_i| \leq 1$ in place of $\epsilon_i$. 
%, or $\{a_i\}_{i \in I}$ so that $|a_i| \leq 1$. 
%{\color{red}This paragraph seems to contradict itself. It looks like $C_\e = \sqrt{\frac BA}$, but we don't know if we can attain $C_\e = \sqrt{\frac BA}$.}

% % % % % % % % % % % % % % % % % % % % % % % % % %
% % % % % % % % % % New Section % % % % % % % % % %
% % % % % % % % % % % % % % % % % % % % % % % % % %
\section{One-unconditional convergence and sums of orthogonal tight frames}\label{maintheorem}

The main result of the paper will be presented in this section. It will be shown in Theorem \ref{bigthm} that frame expansions have unconditional constants all equal to $1$ if and only if the sequence is an orthogonal sum of tight frames. First, we extend Proposition \ref{usefulthm} and Corollary \ref{tightcor} to a Bessel sequence made up of an orthogonal sum of frames.

%{\color{red}Going over all the changes you've made, and documenting them in Table 1 in the edit log, all that remains is where did Prop 3.5 and Cor 3.7 go, and where did Thm 4.1 and Cor 4.2 come from? Of course these match up, but it looks like you've moved the proof from the corollary to the prop/thm, and the original proof for the prop/thm has just disappeared. This may be what you intended, but I want to double check that it's valid.}

\begin{thm}\label{converse_bigthm}
Let $J_i$ be finite or infinite index sets for all $i \in I$. Assume that $\Phi := \{\Phi_i\}_{i \in I} = \{\p_{ij}\}_{i \in I,j \in J_i}$ is a Bessel sequence 
for the Hilbert space
\begin{equation*}
	\mathscr{H} := \left( \sum_{i\in I}\oplus \mathscr{H}_i \right)_{\ell^2}
\end{equation*}
so that for each $i \in I$, the sequence $\Phi_i := \{\p_{ij}\}_{j \in J_i}$ is a frame with lower and upper frame bounds $A_i$ and $B_i$ for the Hilbert space $\mathscr{H}_i$, and assume that the frame bounds satisfy $\sup\{B_i/A_i: i \in I\} <\infty$. Then the unconditional constants of the frame expansions with respect to $\Phi$ are bounded above by $\sup\{\sqrt{\frac{B_i}{A_i}} : i \in I\}$.
\end{thm}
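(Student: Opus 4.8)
The plan is to reduce the statement to Proposition \ref{usefulthm} applied separately on each orthogonal summand $\mathscr{H}_i$, and then to add up the resulting block estimates in $\ell^2$. Write a vector $x \in \mathscr{H}$ as $x = (x_i)_{i\in I}$ with $x_i \in \mathscr{H}_i$, let $T$ and $S$ be the synthesis and frame operators of $\Phi$, and let $T_i$ and $S_i$ be the synthesis and frame operators of $\Phi_i$ on $\mathscr{H}_i$. Since $\p_{ij} \in \mathscr{H}_i$ and the $\mathscr{H}_i$ are mutually orthogonal, $\innerp{x}{\p_{ij}} = \innerp{x_i}{\p_{ij}}$ for all $i,j$, so the frame expansion splits blockwise: $Sx = \sum_{i\in I}\sum_{j\in J_i}\innerp{x_i}{\p_{ij}}\p_{ij} = \sum_{i\in I} S_i x_i$. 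Because $S_i x_i \in \mathscr{H}_i$, this is an orthogonal decomposition, so $\norm{Sx}^2 = \sum_{i\in I}\norm{S_i x_i}^2$, the series converging since $\Phi$ is Bessel.

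Next I would handle all three unconditional constants at once by taking a diagonal operator $D$ on $\ell^2$ of the index set of $\Phi$ of one of the three admissible forms appearing in the definition of $C_\sigma$, $C_\e$, $C_a$ (a $0/1$ mask, a $\pm 1$ sequence, or a sequence of real scalars bounded by $1$), and letting $D_i$ denote its restriction to $\ell^2(J_i)$. Then $D_i$ is a diagonal operator of the same form on $\mathscr{H}_i$ with $\norm{D_i}\le 1$, and exactly the blockwise argument above shows $TDT^*x = \sum_{i\in I} T_i D_i T_i^* x_i$ is again an orthogonal decomposition, hence $\norm{TDT^*x}^2 = \sum_{i\in I}\norm{T_i D_i T_i^* x_i}^2$. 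Applying the appropriate part of Proposition \ref{usefulthm} to the frame $\Phi_i$ gives $\norm{T_i D_i T_i^* x_i} \le \sqrt{B_i/A_i}\,\norm{S_i x_i}$ for every $i$.

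Combining these, $\norm{TDT^*x}^2 \le \sum_{i\in I}\frac{B_i}{A_i}\norm{S_i x_i}^2 \le \bigl(\sup_{i\in I}\frac{B_i}{A_i}\bigr)\sum_{i\in I}\norm{S_i x_i}^2 = \bigl(\sup_{i\in I}\frac{B_i}{A_i}\bigr)\norm{Sx}^2$, and taking square roots yields $\norm{TDT^*x} \le \bigl(\sup_{i\in I}\sqrt{B_i/A_i}\bigr)\norm{Sx}$ for every $x\in\mathscr{H}$ and every admissible $D$. Specializing $D$ to each of the three forms shows that $C_\sigma$, $C_\e$, and $C_a$ for $\Phi$ are all bounded above by $\sup_{i\in I}\sqrt{B_i/A_i}$, which is the claim.

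The only delicate points, and the nearest thing to an obstacle, are the convergence and regrouping justifications: one must check that the frame expansion of the Bessel sequence $\Phi$ may be regrouped according to the blocks $i\in I$ (legitimate precisely because a Bessel sequence's frame expansion converges unconditionally), and that $Sx = \sum_{i\in I} S_i x_i$ is genuinely an orthogonal sum so that the squared norms add. Note also where the hypothesis $\sup_i B_i/A_i < \infty$ is used: it is exactly what allows the supremum to be pulled out of the sum in the last display, producing a single uniform unconditional constant rather than merely a finite bound on each block.
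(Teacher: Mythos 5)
Your proof is correct and follows essentially the same route as the paper's: the paper simply observes that the frame operator of $\Phi$ is the direct sum $\sum_{i\in I}\oplus S_i$ and then applies Proposition \ref{usefulthm} blockwise, which is exactly your argument with the orthogonal-decomposition and norm-additivity details written out. Your version is just a more explicit rendering of the same two-line proof.
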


\begin{proof}
If $S_i$ is the frame operator for $\Phi_i$ for each $i$, then $\mathcal{S} := \sum_{i \in I} \oplus S_i$ is the
frame operator of $\Phi$. Now Proposition \ref{usefulthm} can now be applied to each frame $\Phi_i$ to obtain the desired result.
\end{proof}

\begin{rmk}
Note $\Phi$ may not itself be a frame unless $\inf\{A_i : i \in I\} > 0$. 
\end{rmk}

A direct result of Theorem \ref{converse_bigthm} is that the unconditional constants are $1$ when each individual frame $\Phi_i$ is tight.

\begin{cor}\label{ortho_cor}
If $\Phi := \{\Phi_i\}_{i \in I}$ is an orthogonal sum of tight frames $\Phi_i$, then the frame expansions with respect to $\Phi$ have unconditional constant $1$ for all forms of unconditional.
\end{cor}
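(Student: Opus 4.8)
The plan is to obtain this as an immediate specialization of Theorem \ref{converse_bigthm}. Writing $\Phi = \{\Phi_i\}_{i\in I}$ with each $\Phi_i$ a tight frame for $\mathscr{H}_i$, say with tight frame bound $A_i = B_i$, we are in exactly the setting of Theorem \ref{converse_bigthm}: the ambient space is $\mathscr{H} = \left(\sum_{i\in I}\oplus \mathscr{H}_i\right)_{\ell^2}$, the family $\Phi$ is (as is implicit whenever one speaks of its frame expansion) a Bessel sequence, and each block $\Phi_i$ is a frame with lower and upper frame bounds $A_i$ and $B_i$.

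First I would record that tightness of each block means $B_i/A_i = 1$ for every $i \in I$, so trivially $\sup\{B_i/A_i : i\in I\} = 1 < \infty$ and the hypothesis of Theorem \ref{converse_bigthm} is met. Applying that theorem then gives that all three unconditional constants of the frame expansions with respect to $\Phi$ are bounded above by $\sup\{\sqrt{B_i/A_i} : i\in I\} = 1$. Finally I would invoke the elementary lower bound noted after the definition of the unconditional constants: taking $\sigma = I$, or $\e_i \equiv 1$, or $a_i \equiv 1$ respectively shows that each of $C_\sigma$, $C_\e$, $C_a$ is at least $1$. Combining the two inequalities yields $C_\sigma = C_\e = C_a = 1$, which is the assertion.

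I do not anticipate any genuine obstacle here; the only point worth a word is that ``orthogonal sum of tight frames'' is understood together with the standing requirement that $\Phi$ be Bessel (equivalently, that the tight frame bounds $B_i$ be uniformly bounded above), which is precisely what makes the frame expansion an unconditionally convergent series in the first place, so that the unconditional constants are even defined.
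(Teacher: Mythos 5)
Your proposal is correct and matches the paper's treatment exactly: the paper states this corollary as a direct consequence of Theorem \ref{converse_bigthm}, obtained by noting that tightness gives $B_i/A_i = 1$ for all $i$, so the supremum bound is $1$, while the unconditional constants are always at least $1$ by definition. No gaps.
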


The main theorem is the converse of Corollary \ref{ortho_cor}. To prove it, we first need a lemma.

\begin{lem}\label{bigthmlem}
If $x,y \in \mathscr{H}$ are nonzero vectors satisfying
\begin{align*}
	xy^* + yx^* \geq 0,
\end{align*}
then $y = \lambda x$ for some nonzero $\lambda \in \mathbb{C}$. 
\end{lem}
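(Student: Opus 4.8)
The plan is to extract information from the positivity of the rank-$\le 2$ self-adjoint operator $M := xy^* + yx^*$ by testing it against cleverly chosen vectors, and to conclude that $x$ and $y$ must be parallel. First I would dispose of the degenerate possibility that $\{x,y\}$ is linearly dependent already: if $y=\lambda x$ the claim holds (and $\lambda\ne 0$ since $y\ne 0$), so assume toward a contradiction that $x$ and $y$ are linearly independent. The key observation is that $M$ acts as zero on the orthogonal complement of $\spn\{x,y\}$ and maps $\spn\{x,y\}$ into itself, so it suffices to analyze the $2\times 2$ (or, in the real case, symmetric) matrix of $M$ restricted to the two-dimensional subspace $V:=\spn\{x,y\}$. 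On $V$, $M$ has trace $\innerp{Mx}{x}/\|x\|^2+\cdots$; more usefully, I would compute $\innerp{Mz}{z}=2\,\re\big(\innerp{z}{y}\innerp{x}{z}\big)=2\,\re\big(\innerp{z}{y}\overline{\innerp{z}{x}}\big)$ for $z\in\mathscr H$.

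The main step is to produce a vector $z$ with $\innerp{Mz}{z}<0$. Since $x,y$ are independent, I can pick (by Gram--Schmidt inside $V$) a nonzero $z\in V$ with $\innerp{z}{x}=0$ but $\innerp{z}{y}\ne 0$; wait — that only gives $\innerp{Mz}{z}=0$, which is consistent with $M\ge 0$, so instead I would look at the full quadratic form on $V$. Write $\innerp{Mz}{z}=2\,\re(\alpha\bar\beta)$ where $\alpha=\innerp{z}{y}$, $\beta=\innerp{z}{x}$. As $z$ ranges over $V$, the pair $(\beta,\alpha)$ ranges over all of $\mathbb{C}^2$ (resp. $\mathbb{R}^2$) because $x,y$ are independent. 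Hence I can choose $z$ so that $\alpha\bar\beta$ is a negative real number (e.g. take $\beta=1$, $\alpha=-1$ in the real case, or arrange $\alpha=-\bar\beta\ne 0$ in the complex case), giving $\innerp{Mz}{z}=2\,\re(\alpha\bar\beta)<0$, contradicting $M\ge 0$. Therefore $x$ and $y$ are linearly dependent, which finishes the proof.

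The one point that needs a little care — and which I expect to be the only real obstacle — is making sure the argument is uniform over the real and complex cases, since the paper insists on both. In the real case $\re$ is the identity and the form $2\alpha\beta$ on $\mathbb{R}^2$ is clearly indefinite unless one of $x,y$ forces $\alpha$ or $\beta$ to vanish identically, which cannot happen when $x,y$ are independent and nonzero; in the complex case one uses that the Hermitian form $z\mapsto 2\,\re(\innerp{z}{y}\overline{\innerp{z}{x}})$ restricted to $V$ has matrix $\begin{psmallmatrix}0&1\\1&0\end{psmallmatrix}$ in the ``dual'' coordinates $(\beta,\alpha)$, an indefinite form, so again a negative value is attained. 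An alternative, perhaps cleaner, route is purely algebraic: note $\det$ of $M|_V$ (computed via, say, $\innerp{Mx}{x}\innerp{My}{y}-|\innerp{Mx}{y}|^2$ suitably normalized) comes out $\le 0$ with equality only in the dependent case, so $M|_V$ cannot be positive semidefinite unless it is rank $\le 1$, and a rank-$\le 1$ positive operator of the form $xy^*+yx^*$ with $x,y\ne 0$ again forces proportionality. I would present whichever of these is shortest, most likely the explicit test-vector argument above.
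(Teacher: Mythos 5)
Your argument is correct, and it reaches the conclusion by a genuinely different mechanism than the paper, although both rest on the same two\mbox{-}dimensional fact. The paper argues directly: it sets $u_1 := x/\norm{x}$, picks a unit vector $u_2 \perp u_1$ with $y = su_1 + tu_2$, computes the $2\times 2$ matrix of $M := xy^* + yx^*$ in the basis $\{u_1,u_2\}$, and observes that its $(2,2)$ entry is $0$ while its $(2,1)$ entry is $\norm{x}\,t$; since a positive semidefinite matrix with a vanishing diagonal entry must have vanishing off\mbox{-}diagonal entries in the corresponding row, $t=0$, and $y = (s/\norm{x})\,x$ follows at once with $s \neq 0$ because $y \neq 0$. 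You instead argue by contradiction with a test vector: if $x,y$ were linearly independent, the linear map $z \mapsto \bigl(\innerp{z}{x},\innerp{z}{y}\bigr)$ would have trivial kernel on $V := \spn\{x,y\}$ (any $z \in V$ orthogonal to both $x$ and $y$ lies in $V \cap V^{\perp} = \{0\}$), hence be onto $\mathbb{C}^2$, so you may prescribe $\innerp{z}{x}=1$ and $\innerp{z}{y}=-1$ and obtain $\innerp{Mz}{z} = 2\re{\bigl(\innerp{z}{y}\overline{\innerp{z}{x}}\bigr)} = -2 < 0$, contradicting $M \ge 0$. That is a complete proof; the surjectivity claim is the only step that needs the one\mbox{-}line justification just given, which you should include. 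Your route has the virtue of treating the real and complex cases identically and avoiding any positivity criterion for matrices, while the paper's version is slightly more economical in that it produces the scalar $\lambda = s/\norm{x}$ explicitly rather than deducing proportionality from linear dependence of two nonzero vectors. Before writing it up, trim the exploratory detour (the vector with $\innerp{z}{x}=0$) and commit to the test\mbox{-}vector argument rather than sketching the determinant alternative, which as stated is the hand\mbox{-}wavier of the two.
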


\begin{proof}
First define $u_1 := x/\norm{x}$, let $u_2$ be a unit vector orthogonal to $u_1$ so that $y = su_1 + tu_2$ for some $s,t \in \mathbb{C}$, and define 
$$M : = x y^* + y x^*,$$
in which we have assumed that $M \geq 0$. The matrix representation of $M$ with respect to $\{u_1,u_2\}$ is
\begin{align*}
\norm{x}
\begin{bmatrix}[cc]
	s + \overline{s} & \overline{t} \\
	t & 0 
\end{bmatrix}
\end{align*}
and because $M \geq 0$ and $\norm{x} \neq 0$, this matrix must be positive semidefinite and hence $t = 0$. Therefore, $y = \lambda x$ with $\lambda := s/\norm{x}$.
\end{proof}

\begin{thm}\label{bigthm}
If $\Phi := \{\p_i\}_{i \in I}$ is a spanning Bessel sequence of vectors in $\mathscr{H}$, then the following are equivalent:
\begin{enumerate}[(i)]
\item \label{s21} For every $\sigma \subset I$ and for every $x \in \mathscr{H}$,
\begin{align*} %\label{sigbigthm}
	\norm{ \sum_{i \in \sigma} \innerp{x}{\p_i} \p_i } \leq \bsi
\end{align*}

\item \label{s24} For any sequence $\{\e_i\}_{i\in I}$ with $\e_i \in \{-1,1\}$ and for every $x \in \mathscr{H}$,
\begin{align*} %\label{epbigthm}
	\norm{ \sum_{i \in I} \e_i \innerp{x}{\p_i} \p_i } \leq \bsi
\end{align*}

\item \label{s23} For every sequence of real numbers $\{a_i\}_{i \in I}$ with $\abs{a_i} \leq 1$ for all $i$ and for every $x \in \mathscr{H}$, 
\begin{align*}
	\norm{ \sum_{i \in I} a_i \innerp{x}{\p_i} \p_i } \leq \bsi
\end{align*}

\item \label{s22} There is a partition $\{\mu_j\}_{j \in J}$ of $I$ satisfying:
\begin{enumerate}
	\item \label{s221} For every $j \in J$, $\{\p_i\}_{i \in \mu_j}$ is a tight frame for its closed span,
	\item \label{s222} For any $j_1,j_2 \in J$ with $j_1 \neq j_2$, it follows that $\innerp{\p_{k_1}}{\p_{k_2}} = 0$ for any $k_1 \in \mu_{j_1}$ and $k_2 \in \mu_{j_2}$.
\end{enumerate}
In other words,
\begin{align*}
	\mathscr{H} = \left( \sum_{j \in J} \oplus \, \cspan \{\p_i : i \in \mu_j\} \right)_{\ell^2} 
\end{align*}
where $\{\p_i\}_{i \in \mu_j}$ is a tight frame for $\cspan \{\p_i : i \in \mu_j\}$.
\end{enumerate}
Hence, when any of the above equivalences hold, $\Phi$ is a frame if and only if the infimum of the
tight frame bounds is not equal to zero.
\end{thm}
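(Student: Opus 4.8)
The plan is to prove Theorem~\ref{bigthm} by establishing the cycle $(\ref{s221})\&(\ref{s222})\Rightarrow(\ref{s23})\Rightarrow(\ref{s24})\Rightarrow(\ref{s21})\Rightarrow(\ref{s22})$, where the first implication is already essentially Corollary~\ref{ortho_cor} (an orthogonal sum of tight frames has all unconditional constants equal to $1$, since $\sqrt{B_i/A_i}=1$ for each piece), and the middle two implications $(\ref{s23})\Rightarrow(\ref{s24})\Rightarrow(\ref{s21})$ are trivial specializations ($\e_i\in\{-1,1\}$ is a special case of $|a_i|\le1$, and taking $a_i$ to be indicator values $\{0,1\}$ recovers the subset statement, with a small argument passing between $\{0,1\}$-coefficients and $\pm1$-coefficients via the identity $\sum_{i\in\sigma}v_i=\tfrac12(\sum_i v_i+\sum_i\e_i v_i)$ with $\e_i=1$ on $\sigma$ and $-1$ off it). So the entire content is the implication $(\ref{s21})\Rightarrow(\ref{s22})$: from the hypothesis that $\norm{S_\sigma x}\le\norm{Sx}$ for every $\sigma$ and every $x$, we must produce the orthogonal decomposition into tight frames.

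For $(\ref{s21})\Rightarrow(\ref{s22})$ I would first use Proposition~\ref{squares}: the hypothesis with $C=1$ is equivalent to $S_\sigma^2\le S^2$ for all $\sigma\subset I$. The key is to exploit this for the two-element-type splittings. Fix an index $k\in I$ and take $\sigma=\{k\}$, so $S_{\{k\}}=\p_k\p_k^*$. Also apply the inequality to $\sigma^c=I\setminus\{k\}$, giving $S_{\sigma^c}^2\le S^2$; since $S=S_{\{k\}}+S_{\sigma^c}$, expanding $S^2-S_{\sigma^c}^2\ge0$ yields $S\,S_{\{k\}}+S_{\{k\}}S - S_{\{k\}}^2\ge0$, i.e. $S\p_k\p_k^*+\p_k\p_k^*S\ge \norm{\p_k}^2\p_k\p_k^*$, which rearranges to $(S\p_k)\p_k^*+\p_k(S\p_k)^*\ge \norm{\p_k}^2\,\p_k\p_k^*$. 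Writing $y=S\p_k-\tfrac12\norm{\p_k}^2\p_k$, this says $y\p_k^*+\p_k y^*\ge0$ (one checks $y\p_k^*+\p_k y^* = S\p_k\p_k^* + \p_k\p_k^* S - \norm{\p_k}^2\p_k\p_k^*$). Then Lemma~\ref{bigthmlem} forces $S\p_k=\lambda_k\p_k$ for some real $\lambda_k>0$ (positivity of $S$ and the sign conditions in the lemma's matrix computation give $\lambda_k$ real and positive); so \emph{every frame vector is an eigenvector of $S$}. Having that, I would group the indices by eigenvalue: for each eigenvalue $\lambda$ of $S$ let $\mu_\lambda=\{i\in I:\p_i\ \text{is a }\lambda\text{-eigenvector}\}$. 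Since the $\p_i$ span $\mathscr H$ and each lies in some eigenspace $E_\lambda=\ker(S-\lambda\mathbb I)$, the subspaces $\cspan\{\p_i:i\in\mu_\lambda\}$ are pairwise orthogonal (eigenspaces of a self-adjoint operator for distinct eigenvalues are orthogonal) and their $\ell^2$-sum is all of $\mathscr H$; this is (\ref{s222}) and the displayed decomposition. Finally, for fixed $\lambda$, restricting $S$ to $E_\lambda$: for $x\in E_\lambda$ we have $Sx=\lambda x$ and, because frame vectors in other eigenspaces are orthogonal to $x$, also $S_{\mu_\lambda}x=\sum_{i\in\mu_\lambda}\innerp{x}{\p_i}\p_i=Sx=\lambda x$, so $\{\p_i\}_{i\in\mu_\lambda}$ is a $\lambda$-tight frame for $E_\lambda$, giving (\ref{s221}).

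The last sentence of the theorem ("$\Phi$ is a frame iff the infimum of the tight frame bounds is not zero") follows by a short separate argument: if the partition is $\{\mu_j\}$ with $\{\p_i\}_{i\in\mu_j}$ an $A_j$-tight frame for its span $\mathscr H_j$, then for $x=\sum_j x_j$ with $x_j\in\mathscr H_j$ we have $\sum_{i}|\innerp{x}{\p_i}|^2=\sum_j\sum_{i\in\mu_j}|\innerp{x_j}{\p_i}|^2=\sum_j A_j\norm{x_j}^2$; this is squeezed between $(\inf_j A_j)\norm x^2$ and $(\sup_j A_j)\norm x^2$, and $\sup_j A_j\le B<\infty$ by the Bessel bound, so $\Phi$ is a frame precisely when $\inf_j A_j>0$ (and if $\inf_j A_j=0$ one picks unit vectors $x_j\in\mathscr H_j$ with $A_j\to0$ to violate the lower bound).

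The main obstacle is the step extracting eigenvector information from the operator inequality: one has to be careful that $S_\sigma^2\le S^2$ is applied to the \emph{complement} $\sigma^c$ (not to $\{k\}$ directly) to get the mixed term $S\p_k\p_k^*+\p_k\p_k^*S$ with the right sign, and then to massage this into exactly the form $y\p_k^*+\p_k y^*\ge0$ so that Lemma~\ref{bigthmlem} applies; verifying that the resulting scalar $\lambda_k$ is real and strictly positive (rather than merely that $S\p_k$ is parallel to $\p_k$) requires reading off the sign of the diagonal entry in the lemma's $2\times2$ matrix together with positivity of $S$. Everything after "every frame vector is an eigenvector of $S$" is routine spectral bookkeeping.
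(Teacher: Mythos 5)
Your proposal is correct and follows essentially the same route as the paper: reduce (i) to $S_\sigma^2\le S^2$ via Proposition~\ref{squares}, apply it with $\sigma=I\setminus\{k\}$ to get $(S\p_k)\p_k^*+\p_k(S\p_k)^*\ge(\p_k\p_k^*)^2\ge 0$, invoke Lemma~\ref{bigthmlem} to conclude each $\p_k$ is an eigenvector of $S$, and then partition the index set by eigenvalue. The only cosmetic differences are your recentering $y=S\p_k-\tfrac12\norm{\p_k}^2\p_k$ (the paper applies the lemma directly to $x=\p_k$, $y=S\p_k$, which also sidesteps having to rule out $y=0$) and your direct verification of tightness on each eigenspace where the paper checks that $S_{\mu_j}/\lambda_j$ is a projection.
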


\begin{proof}
The implications (\ref{s22}) $\Rightarrow$ (\ref{s21}), (\ref{s24}), (\ref{s23})   is given by Corollary \ref{ortho_cor}. Also, (\ref{s23}) $\Rightarrow$ (\ref{s21}), (\ref{s24}) and (\ref{s24}) $\Rightarrow$ (\ref{s21}) are immediate from how the unconditional constants are related. All that is left to show is (\ref{s21}) $\Rightarrow$ (\ref{s22}).

Thus, assume (\ref{s21}) holds. Recall that (\ref{s21}) is equivalent to $S_\sigma^2 \leq S^2$ for all $\sigma \subset I$ (see Proposition \ref{squares}). Also, without loss of generality we can assume that $\p_i \neq 0$ for all $i \in I$. First, we show that all vectors in the Bessel sequence are eigenvectors of $S$. Let $i \in I$ and put $\sigma := I \backslash \{i\}$ so that $S_\sigma = S - \phi_i \phi_i^*$. Hence,
\begin{align*}
S^2 \ge S_\sigma^2 &= (S - \p_i \p_i^*)^2 \\
&= S^2 - S(\phi_i \phi_i^*) - (\phi_i \phi_i^*) S + (\phi_i \phi_i^*)^2
\end{align*}
which further implies that
\begin{equation*}
	(S\phi_i) \phi_i^* + \phi_i (S\phi_i)^* \ge (\phi_i \phi_i^*)^2 > 0.
\end{equation*}
It therefore follows that $S\phi_i \neq 0$ and that $(S\phi_i) \phi_i^* + \phi_i (S\phi_i)^* \geq 0$ so that Lemma \ref{bigthmlem}  can be applied to give $S\phi_i = \lambda_i \phi_i$ for some nonzero scalar $\lambda_i$. That is, $\phi_i$ is an eigenvector of $S$.

Finally, let $\{\lambda_j\}_{j \in J}$ be an enumeration of the distinct nonzero eigenvalues of $S$ and let $\mu_j := \{i \in I : \lambda_i = \lambda_j\}$. Then $\{\mu_j\}_{j \in J}$ is a partition of $I$ for which $\{\p_i\}_{i \in \mu_j}$ is a tight frame for its closed span with tight frame bound $\lambda_j$. This follows from using the facts that $S = \sum_{j \in J} S_{\mu_j}$ and $S_{\mu_k} S_{\mu_\ell} = 0$ for $k \neq \ell$ to show that $P_j := S_{\mu_j}/\lambda_j$ is a projection.
%All that is left is to check (\ref{s24}) $\Rightarrow$ (\ref{s22}). Note that the inequality in (\ref{s24}) is precisely $\norm{S_\sigma x - S_{\sigma^c} x} \leq \norm{Sx}$ with $\sigma = \{i : \e_i = 1\}$, which is further equivalent to the operator inequality $(S_\sigma - S_{\sigma^c})^2 \le S^2$. As in the proof of (\ref{s21}) $\Rightarrow$ (\ref{s22}), picking $\sigma = I \backslash \{i\}$ gives a nearly identical proof. In particular,
%\begin{align*}
%	S^2 \ge (S_\sigma - S_{\sigma^c})^2 = (S - 2S_{\sigma^c})^2 = S^2 - 2SS_{\sigma^c} - 2S_{\sigma^c}S + 4(S_{\sigma^c})^2
%\end{align*}
%leads to the operator inequality
%\begin{align*}
%	(S\phi_i) \phi_i^* + \phi_i (S\phi_i)^* \ge 2 (\phi_i \phi_i^*)^2 > 0.
%\end{align*}
%and thus Lemma \ref{bigthmlem} can be applied as before to conclude (\ref{s22}). This concludes the %proof.
\end{proof}

\begin{rmk}
	Note that the previous theorem classifies where signed frames, scaled frames (with scaling factors $\abs{a_i} \le 1$), and weighted frames can produce larger norms for the frame operator than the original frame, and Proposition \ref{usefulthm} gives the bounds.
\end{rmk}

\begin{rmk}
Theorem  \ref{converse_bigthm} and Theorem \ref{bigthm} can both be reformulated in terms of fusion frames (see \cite{amspaper} for related definitions). If a fusion frame is made up of orthogonal subspaces $W_i$ for $i \in I$ and $\{\phi_{ij}\}_{j \in J_i}$ is a frame for $W_i$  with bounds $A_i, B_i$, then Theorem \ref{converse_bigthm} gives that the fusion frame expansions have unconditional constants bounded above by $\sup\{\sqrt{\frac{B_i}{A_i}} : i \in I\}$. Furthermore, Theorem \ref{bigthm} implies that a 1-unconditional fusion frame $(W_i,w_i)$ is actually an orthogonal sum of the fusion subspaces.
\end{rmk}

% % % % % % % % % % % % % % % % % % % % % % % % % %
% % % % % % % % % % New Section % % % % % % % % % %
% % % % % % % % % % % % % % % % % % % % % % % % % %
\section{Examples}\label{examples}

This section contains two fundamental examples. The first shows a family of frames for which $C_\sigma > \frac{1}{2}\sqrt{\frac{B}{A}}$ and that $\norm{S_\sigma x}$ can be arbitrarily large when compared to $\norm{Sx}$ for appropriate choices of $\sigma \subsetneq I$ and $x \in \mathscr{H}$. The second gives a non-tight infinite frame for which $C_\e = \sqrt{\frac BA}$. 

\begin{eg}\label{eg2}
We will establish a family of frames indexed by a positive integer $N$, such that $\sqrt{\frac BA} = \sqrt{N}$, and show
\begin{align*}
	\frac{\sqrt{N}}{2} \norm{S x} < \norm{S_\sigma x} \le \sqrt{N} \norm{Sx}.
\end{align*}
This establishes that $C_\sigma > \frac{1}{2}\sqrt{\frac{B}{A}}$ and the proportionality $\norm{S_\sigma x} \approx \sqrt{N} \norm{Sx}$, so that the ratio $\norm{S_\sigma x} \big/ \norm{Sx}$ grows arbitrarily large as $N$ is taken towards infinity.

%For readability of this example, $I$ will denote the identity matrix rather than the index set.
Fix a positive integer $N \geq 3$. Let $\{e_i\}_{i \in [N]}$ denote the standard orthonormal basis for $\ell_2(N)$. Set $v$ to be the so called ``all ones" vector and let $P$ be the orthogonal projection onto this vector:
\begin{align*}
	v := \sum_{i=1}^{N}e_i =
		\begin{bmatrix}
			1 \\ \vdots \\ 1
		\end{bmatrix}, \qquad
	P := \frac{vv^*}{\norm{v}^2} = \frac{1}{N}
		\begin{bmatrix}
			1 &        & 1 \\
			  & \ddots &   \\
			1 &        & 1
		\end{bmatrix}.
\end{align*}

Denote by $\mathrm{X}$, the $N-1$ dimensional hyperplane orthogonal to $v$. Then $(\mathbb{I}-P)$ is the projection onto $\mathrm{X}$ and has the form
\begin{align}
	\mathbb{I}-P =
		\begin{bmatrix}
			1 - \frac{1}{N} &        &   - \frac{1}{N} \\
			                & \ddots &                 \\
			  - \frac{1}{N} &        & 1 - \frac{1}{N}
		\end{bmatrix} %=
		%\begin{bmatrix}
		%	\frac{N-1}{N} &        & -\frac{1}{N} \\
		%	              & \ddots &              \\
		%	-\frac{1}{N}  &        & \frac{N-1}{N}
		%\end{bmatrix}
	= \left[\delta_{ij} - \dfrac{1}{N}\right]_{i,j \in [N]}. \label{IP}
\end{align}

Define the following sequence of vectors $\Phi := \{\p_i\}_{i\in[N]}$ and view them as columns in the synthesis operator for $\Phi$:
\begin{gather*}
	\p_i := (\mathbb{I} - P) e_i = e_i - \frac{1}{N} v, \quad \mbox{for all } i \in [N], \\
	T
	:= \begin{bmatrix}
		\p_1 & \cdots & \p_N
	\end{bmatrix}
	= (\mathbb{I}-P)
	\begin{bmatrix}
		e_1 & \cdots & e_N
	\end{bmatrix}
	= (\mathbb{I}-P)\mathbb{I} = (\mathbb{I}-P).
\end{gather*}

As an orthogonal projection, $(\mathbb{I}-P)$ is self-adjoint and idempotent so we have $T = T^* = S = G = (\mathbb{I}-P)$. Thus $(\mathbb{I}-P)$ serves many purposes. It gives the vectors of $\Phi$ as well as the synthesis, analysis, frame, and Gramian operators. As a projection onto $\mathrm{X}$, $S$ fixes every $x \in \mathrm{X}$ and so $\Phi$ forms a Parseval frame for its range, $\mathrm{X}$. Furthermore, since $G$ is given by (\ref{IP}), the equality of the diagonal entries and the equality of the off-diagonal entries implies that the frame is equal norm and equiangular, respectively.
%\begin{align*}
%	G = [\innerp{\p_i}{\p_j}]_{i,j\in[N]} %= I-P= \left[\delta_{i,j}- \frac{1}{N}\right]_{i,j \in [N]}
%\end{align*} 
%implies that the frame is equal norm and equiangular.

%We will show this is an equal norm, equiangular, Parseval frame for the range of $(I-P)$. By construction, these $N$ vectors lie in an $N-1$ dimensional hyperplane, $\mathrm{X}$, orthogonal to $v$. Since
%\begin{align*}
%	\innerp{\p_i}{\p_j} &= \delta_{i,j}- \frac{1}{N}, \quad \text{ for all }1\le i,j\leq N,
%%	\norm{\p_i}^2 &= \innerp{\p_i}{\p_i} = 1 - \frac{1}{N}
%\end{align*}
%they are equiangular and equal norm. In a moment we will show they also sum to zero.

Now, for ${\sigma\subset [N]}$ notice that
\begin{align}
	\norm{\sum_{i \in \sigma} \p_i }^2
	& = \innerp{ \sum_{i \in \sigma} \p_i }{ \sum_{j \in \sigma} \p_j } \notag \\
	& = \sum_{i,j \in \sigma} \left( \delta_{i,j}- \frac{1}{N} \right)  \label{e:1} \\
	& = \abs{\sigma} \left( 1 - \frac{\abs{\sigma}}{N} \right).          \notag
\end{align}
Therefore, if we sum over all $N$ frame vectors so that $\abs{\sigma}=N$ and take its norm, the computation in (\ref{e:1}) gives
\begin{equation*}
	\label{e:3} \sum_{i=1}^N\p_i = 0.
\end{equation*}
That is, $\phi_1 \in \spn{\{\p_i\}_{i = 2}^N}$ and the removal of $\p_1$ still leaves a frame $\Psi:=  \{\p_i\}_{i = 2}^N$ for $\mathrm{X}$ with frame operator $S^\Psi = S - \p_1\p_1^* = \mathbb{I} - P - \p_1\p_1^*$.  The upper frame bound of $\Psi$ remains 1, however, the lower frame bound is $1/N$ since $\p_1\p_1^*\le \norm{\p_1}^2 (\mathbb{I}-P)$ combined with the fact that $\|\p_1\|^2 = 1 - \frac{1}{N}$ implies 
\begin{equation*}
	S^{\Psi} = \mathbb{I} - P - \p_1\p_1^* \ge (1 - \norm{\p_1}^2) (\mathbb{I} - P) = \frac{1}{N}(\mathbb{I} - P).
\end{equation*}
This lower bound is achieved since $S^{\Psi}\p_1= (\mathbb{I} - P)\p_1 - \phi_1\phi_1^*\phi_1 = {\p_1 - \p_1(1 - \frac 1N)} = \frac{1}{N}\p_1$. Proposition \ref{usefulthm} therefore gives that
\begin{equation}\label{baex3:1}
\norm{S_\sigma^\Psi x} \leq \sqrt{N} \norm{S^\Psi x}
\end{equation}
for all $\sigma \subset  \{2,\dots,N\}$ and all $x \in \mathrm{X}$. We will show that there is a ${\sigma \subsetneq \{2,\dots,N\}}$ for which
\begin{equation}\label{baex3:2}
\dfrac{\sqrt{N}}{2} \norm{S^\Psi\p_1} < \norm{S_\sigma^\Psi \phi_1}.
\end{equation}
This shows that $C_\sigma > \frac{\sqrt{N}}{2} = \frac{1}{2}\sqrt{\frac{B}{A}}$. Also, (\ref{baex3:2}) combined with (\ref{baex3:1}) will give that
\begin{align*}
	\norm{S^\Psi \p_1} \approx \sqrt{N}\norm{S_\sigma^\Psi \phi_1} 
\end{align*}
so that taking $N$ arbitrarily large will show that $\norm{S^\Psi_\sigma \p_1}$ can be arbitrarily large when compared to $\norm{S^\Psi \p_1}$.

To prove (\ref{baex3:2}), first notice that for every $\sigma \subset \{2,\dots,N \}$ we have
\begin{align*}
	S_\sigma^\Psi \phi_1 = \sum_{j \in \sigma} \innerp{\p_1}{\p_j} \p_j &= -\frac{1}{N} \sum_{j \in \sigma} \phi_j 
\end{align*}
so that by (\ref{e:1}),
\begin{align}\label{e:2}
	\norm{S_\sigma^\Psi \phi_1}^2 &= \frac{1}{N^2}\abs{\sigma}\left( 1 - \frac{\abs{\sigma}}{N} \right) = \frac{N\abs{\sigma}-\abs{\sigma}^2}{N^3}.
\end{align}
Now, putting $N-1$ in for $\abs{\sigma}$ implies
\begin{equation*}\label{baex3:3}
	\norm{S^\Psi \p_1}^2 = \dfrac{N-1}{N^3}.
\end{equation*}

As a parabola in $\abs{\sigma}$, equation (\ref{e:2}) obtains its maximum at $\abs{\sigma} = N/2$ and has zeros at $\abs{\sigma} \in \{0,N\}$. Any nonempty subset of $\Psi$ will have $\abs{\sigma} \in \{1,\dots,N-1\}$ so that $\norm{S_\sigma^\Psi \phi_1}^2 \ge \norm{S^\Psi \p_1}^2$. We want to maximize this left hand side, so choose any $\sigma \subset \{2,\dots,N\}$ with $\abs{\sigma}$ the largest integer less than or equal to $N/2$. Note this will be a proper subset since $N \ge 3$. Therefore, because $\abs{\sigma} \ge \frac{N-1}{2}$, (\ref{e:2}) gives
\begin{equation*}
\norm{S^\Psi_\sigma\phi_1}^2 = \frac{N\abs{\sigma}-\abs{\sigma}^2}{N^3} \ge \frac{N^2-1}{4N^3} = \left( \frac{N+1}{4} \right) \left( \frac{N-1}{N^3} \right) > \frac N4 \norm{S^\Psi \p_1}^2
\end{equation*}
which implies (\ref{baex3:2}) by taking square roots. 
\end{eg}

We next give a non-trivial example of an infinite dimensional frame in which $C_{\varepsilon} = \sqrt{\frac{B}{A}}$. 

\begin{eg}\label{eg4}
Let $\{e_i\}_{i=1}^{\infty}$ be the unit vector basis of
$\ell_2(\mathbb{N})$ and define $\Phi := \{\phi_i\}_{i=1}^{\infty}$ by
\begin{equation*}
	\phi_i:= e_i+\frac{1}{2}e_{i+1}
\end{equation*}
for all $i \in \mathbb{N}$. First, note that $\Phi$ spans $\ell^2(\mathbb{N})$ because
\begin{equation*}
	\sum_{i=0}^{\infty}\frac{(-1)^{i}}{2^i}\phi_{i+j} = e_j
\end{equation*}
for all $j \in \mathbb{N}$.
Next, we compute the frame bounds. For every $x \in \ell^2(\mathbb{N})$ we have that
\begin{align*}
\norm{T^*x}^2 = \sum_{i = 1}^\infty \abs{\innerp{x}{\p_i}}^2 &= \sum_{i = 1}^\infty \abs{\innerp{x}{e_i + \dfrac{1}{2} e_{i + 1}}}^2  \\
&= \sum_{i = 1}^\infty \abs{\innerp{x}{e_i}}^2 + \frac{1}{4} \sum_{i = 1}^\infty \abs{\innerp{x}{e_{i+1}}}^2 + \re{\sum_{i = 1}^\infty \innerp{x}{e_i} \overline{\innerp{x}{e_{i+1}}}} \\
&= \norm{x}^2 + \dfrac{1}{4}\left(\|x\|^2 - |x_1|^2\right) + \re{\sum_{i = 1}^\infty \innerp{x}{e_i} \overline{\innerp{x}{e_{i+1}}}}.
\end{align*}
By the Cauchy-Schwarz inequality, 
\begin{align*}
\re{\sum_{i = 1}^\infty \innerp{x}{e_i} \overline{\innerp{x}{e_{i+1}}}} &\leq \left(\sum_{i = 1}^\infty \abs{\innerp{x}{e_i}}^2 \right)^{1/2}  \left(\sum_{i = 1}^\infty \abs{\innerp{x}{e_{i+1}}}^2 \right)^{1/2} \\
&=\|x\|\left(\|x\|^2 - |x_1|^2 \right)^{1/2}
\end{align*}
with equality if and only if $c\innerp{x}{e_i} = \innerp{x}{e_{i+1}}$ for all $i \in \mathbb{N}$ for some constant $c$. Therefore, 
\begin{align*}
\|T^*x\|^2 \geq \|x\|^2 + \dfrac{1}{4} \left(\|x\|^2 - |x_1|^2\right) -  \|x\|\left(\|x\|^2 - |x_1|^2  \right)^{1/2} \geq \dfrac{1}{4}\|x\|^2
\end{align*}
and
\begin{align*}
\|T^*x\|^2 \leq \|x\|^2 + \dfrac{1}{4} \left(\|x\|^2 - |x_1|^2\right)+ \|x\|\left(\|x\|^2 - |x_1|^2  \right)^{1/2} \leq \dfrac{9}{4}\|x\|^2.
\end{align*}
The first inequality in each line is equality for vectors of the form $x =(a,ca,c^2a,c^3a,\dots)$ with $|c| < 1$, where this assumption on $c$ guarantees it is in $\ell^2(\mathbb{N})$. The second inequality in each line becomes tight with $c$ negative or positive, respectively, as $a \to 0$. Therefore, the lower and upper frame bounds are $A:=1/4$ and $B:=9/4$, respectively, and so $B/A = 9$.

Now, define the vector $y$ to be so that $\innerp{y}{\phi_i} = (-1)^{i+1}$ for all $i\in\mathbb{N}$. The existence of such a vector can be checked by a recursive computation. Note that this vector will not lie in $\ell^2(\mathbb{N})$ since its image under the analysis operator does not. However, the truncated vectors $y_n$ which equal $y$ on the first $n$ coordinates and zero elsewhere are in $\ell^2(\mathbb{N})$ . For every $n \in \mathbb{N}$,
\begin{align*}
	\norm{\sum_{i=1}^\infty \innerp{y_n}{\phi_i}\phi_i}^2
	&= \norm{\sum_{i = 1}^n \innerp{y}{\phi_i}\phi_i}^2 = \norm{\sum_{i = 1}^n (-1)^{i+1} \phi_i}^2 \\
	&= \norm{e_1 - \dfrac{1}{2} e_2 + \dfrac{1}{2}e_3 - \dfrac{1}{2}e_4 + \cdots + \dfrac{(-1)^{n+1}}{2}e_{n} + \dfrac{(-1)^{n+1}}{2} e_{n+1}}^2\\
&= 1 + \dfrac{1}{4}n
\end{align*}
and choosing $\{\epsilon_i\}_{i = 1}^\infty$ to be $\epsilon_i = (-1)^{i+1}$ for all $i \in \mathbb{N}$ gives
\begin{align*}
\norm{\sum_{i=1}^\infty \epsilon_i\innerp{y_n}{\phi_i}\phi_i}^2 &= \norm{\sum_{i=1}^\infty (-1)^{i+1}\innerp{y_n}{\phi_i}\phi_i}^2 
= \norm{\sum_{i = 1}^n (-1)^{i+1}\innerp{y}{\phi_i}\phi_i}^2 
= \norm{\sum_{i = 1}^n  \phi_i}^2\\
&= \norm{e_1 + \dfrac{3}{2} e_2 + \dfrac{3}{2}e_3 + \dfrac{3}{2}e_4 + \cdots + \dfrac{3}{2}e_{n} + \dfrac{1}{2} e_{n+1}}^2\\
&= \dfrac{5}{4} + \dfrac{9}{4}(n-1).
\end{align*}
Putting all of this together, we obtain for all $n \in \mathbb{N}$,
\begin{align*}
\norm{\sum_{i=1}^\infty \epsilon_i\innerp{y_n}{\phi_i}\phi_i}^2  = \dfrac{9n-4}{n+4} \norm{\sum_{i=1}^\infty \innerp{y_n}{\phi_i}\phi_i}^2
\end{align*}
and taking letting $n \to \infty$ gives $C_\e^2 = 9 = B/A$. 
\end{eg}

% % % % % % % % % % % % % % % % % % % % % % % % % %
% % % % % % % % % % New Section % % % % % % % % % %
% % % % % % % % % % % % % % % % % % % % % % % % % %
\section{Frame Multipliers}\label{frmults}

The operators $S_\sigma$ and the sums $ \sum_{i\in I} a_i \innerp{\cdot}{\p_i} \p_i$ and $\sum_{i\in I} \epsilon_i \innerp{\cdot}{\p_i} \p_i $, can be considered as special cases of multipliers $M_{(m_i)_{i\in I}, (\phi_i)_{i\in I}, (\psi_i)_{i\in I}}$ defined by 
\begin{equation*}
M_{(m_i)_{i\in I}, (\p_i)_{i\in I}, (\psi_i)_{i\in I}} x = \sum_{i\in I} m_i \innerp{x}{\psi_i} \p_i
\end{equation*}
for those $x$ for which the sum converges.

Gabor multipliers (see e.g. \cite{mult1}) are used in applications, in particular in signal processing, where they are used as a way to implement time-variant filters. Later on, multipliers for general Bessel sequences were introduced and investigated in \cite{mult2}; multipliers for general sequences, unconditional convergence, and invertibility of multipliers were investigated in \cite{mult4,mult3,mult6,mult5}.

In this language, Theorem \ref{bigthm} gives that for a spanning Bessel sequence $\Phi$,
\begin{equation*}
	\norm{M_{(a_i)_{i \in I},(\phi_i)_{i \in I},(\phi_i)_{i \in I}}} \le 1 \quad \text{for all } \abs{a_i} \le 1
\end{equation*}
if and only if $\Phi$ is an orthogonal sum of tight frames.

\bigskip
\noindent {\bf Acknowledgment:}  We wish to thank Jameson Cahill, Karlheinz Gr\"ochenig, Mark Lammers, and Adam Marcus for helpful discussions related to this paper. We are also deeply indebted to the referee for giving the paper considerable attention, resulting in a significant improvement.

% % % % % % % % % % % % % % % % % % % % % % % % % %
% % % % % % % % % Bibliography  % % % % % % % % % %
% % % % % % % % % % % % % % % % % % % % % % % % % %

% % % % % % % % % % % % % % % % % % % % % % % % % %
% % % % % % % % Author Info % % % % % % % % % % % %
% % % % % % % % % % % % % % % % % % % % % % % % % %
\bigskip
{\footnotesize \begin{center}
\begin{tabular}{ccc} 
Travis Bemrose, & Peter G. Casazza, & Richard G. Lynch\\ 
tjb6f8@mail.missouri.edu, & pete@math.missouri.edu, & rglz82@mail.missouri.edu\\
\multicolumn{3}{c}{Department of Mathematics, University of Missouri-Columbia, 65201, USA}
\end{tabular}

\bigskip
Victor Kaftal \\
victor.kaftal@uc.edu \\
Department of Mathematics, University of Cincinnati, 45221, USA
\end{center}

\end{document}